\documentclass[10pt,a4paper]{amsart}
\usepackage{amsmath,amsfonts,amssymb,amsthm,mathdots}
\usepackage[utf8x]{inputenc}

\theoremstyle{plain}
\newtheorem{thmABC}{Theorem}

\newtheorem{corABC}[thmABC]{Corollary}
 \newtheorem{lem}{Lemma}
\newtheorem{claim}{Claim}

 \theoremstyle{definition}
 \newtheorem{defn}{Definition}
 \newtheorem{notn}{Notation}

\theoremstyle{remark}
\newtheorem*{rmk*}{Remark}

\newcommand{\vvert}{\ \vert\ }
\newcommand{\norm}[1]{{\vert #1 \vert}}
\newcommand{\wt}[1]{\widetilde{#1}}
\newcommand{\wh}[1]{\widehat{#1}}
\newcommand{\cwr}{\mbox{\textnormal{\small\textcircled{$\wr$}}}}
\newcommand{\comm}[1]{}
\newcommand{\mS}{\mathcal{S}}
\newcommand{\Sym}{\mathrm{Sym}}
\renewcommand{\a}{\alpha}

\title[On HJI groups with complete Hausdorff dimension spectrum]{On hereditarily just infinite profinite groups with complete Hausdorff dimension spectrum}
\author{YIFTACH BARNEA} \address{Department of Mathematics, Royal Holloway, University of London, Egham, Surrey TW20 0EX, UK}\email{y.barnea@rhul.ac.uk}

\author{MATTEO VANNACCI} \address{Mathematisches Institut der
  Heinrich-Heine-Universit\"at, Universit\"atsstr.\ 1, 40225
  D\"usseldorf, Germany}\email{matteo.vannacci@uni-duesseldorf.de}

\keywords{Hereditarily just infinite groups, iterated wreath products,
Hausdorff dimension}

\subjclass[2010]{Primary 20E18; Secondary 20E22}

\begin{document}

\maketitle

\begin{abstract}
 We prove that the inverse limit of certain iterated wreath products in product action have complete Hausdorff dimension spectrum with respect to their unique maximal filtration of open normal subgroups. Moreover we can produce explicitly subgroups with a specified Hausdorff dimension.
\end{abstract}

\section{Introduction and results}
\subsection{Introduction}
The study of Hausdorff dimension in profinite groups was initiated by Abercrombie \cite{Ab94} and it has received considerable attention in recent times. For instance, the set of possible Hausdorff dimensions of closed subgroups, the \emph{Hausdorff dimension spectrum}, has been widely studied in pro-$p$ groups (see Section~\ref{subsec:hausdorff} for the definition). 
For example, it is proved in \cite[Theorem~1.1]{barnea:hausdorff} that a $p$-adic analytic pro-$p$ group has finite Hausdorff dimension spectrum, consisting only of rational numbers, with respect to its $p$-power filtration. One of the main open questions about Hausdorff dimension asks whether the converse of \cite[Theorem~1.1]{barnea:hausdorff} holds. This has been recently confirmed for solvable pro-$p$ groups \cite{KTZ}. 

Another natural question is: which groups have complete spectrum (\cite[Problem~5]{barnea:hausdorff})? Some examples are $(t\mathbb{F}_p[\![ t[\!],+)$ and $(1+t\mathbb{F}_p[\![ t[\!],\cdot)$ with respect to their $t$-power filtrations, see \cite[Lemma 4.1]{barnea:hausdorff} and \cite[Lemma 4.2]{barnea:hausdorff}, but these are not finitely generated. In fact, it is not even known whether the Hausdorff dimension spectrum of a finitely generated free pro-$p$ group is complete (see \cite[Theorem~4.10]{shalev:newhorizons} and \cite[Problem~4]{barnea:hausdorff}). 

One of the very few families of examples of profinite groups with complete spectrum is given by automorphism groups of rooted trees with respect to the filtration of the level-stabilisers. In \cite[Theorem~2]{abert:hausdorff} the authors prove that the Hausdorff dimension spectrum of the full automorphism group of a rooted tree is complete, nevertheless the proof relies on probabilistic methods and does not give explicit subgroups with a fixed Hausdorff dimension. Later various authors eventually found explicitly subgroups of automorphisms groups of rooted trees with interesting properties (see \cite{sunic:1} and \cite{Si08}). On the other hand, in the special class of profinite \emph{branch} groups it is possible to construct explicitly subgroups of each given dimension as observed by Klopsch and R\"over \cite[Chapter~8]{klopsch:thesis} and their construction is similar in spirit to the proof of our Theorem~\ref{thm:B} below.

Observe that the automorphism group of a rooted tree can be seen as an infinitely iterated permutational wreath product. The goal of this work is to generalize \cite[Theorem~2]{abert:hausdorff} to \emph{infinitely iterated wreath products in product action}. These groups arise as follows; see
Section~\ref{sec:1} for a more detailed description.  Let
$\mS = (S_k)_{k\in \mathbb{N}}$, with $S_k \le \Sym(\Omega_k)$, be a
sequence of finite transitive permutation groups.  The inverse limit
\[
W^\mathrm{pa}(\mS) = \varprojlim W^\mathrm{pa}_n
\]
of the inverse system
$W^\mathrm{pa}_1 \twoheadleftarrow W^\mathrm{pa}_2 \twoheadleftarrow
\ldots$ of finite iterated wreath products
\[
  W^\mathrm{pa}_n  = S_n \,\cwr\, (S_{n-1} \,\cwr\, ( \cdots \,\cwr\,
  S_0 )) \le
                    \Sym(\wt{\Omega}_n) \quad \text{for $\wt{\Omega}_n =
                      \Omega_n^{\big(\Omega_{n-1}^{\big(\iddots^{\Omega_1}\big)} \big)}$}. 
\]
is called the \emph{infinitely iterated wreath product of type $\mS$
  w.r.t.\ product actions}.

By \cite[Theorem~6.2]{Re12} and
\cite{Va15}, every infinitely iterated
wreath product w.r.t.\ product actions $W^\mathrm{pa}(\mS)$, based on a
sequence $\mS$ of finite non-abelian simple permutation groups, is a
finitely generated hereditarily just infinite profinite group that is
not virtually pro-$p$ for any prime~$p$. In \cite{wilson:largehereditarily,Va15,Va16,KV17}
some embedding, generation and presentation properties of such groups
have been established, but many of their features are not yet fully
understood.

\subsection{Results}
The main result of this paper is that certain infinitely iterated wreath products in product action have complete Hausdorff dimension spectrum with respect to their unique maximal descending chain of open normal subgroups. We start with some notation, let $\mathcal{S}=(S_k\le \Sym(\Omega_k))_{k\in \mathbb{N}}$ be a sequence of finite permutation groups. We say that $\mathcal{S}$ is \emph{good} if there exists a constant $A>0$ and a natural number $M_0$ such that $\norm{S_k}\le \norm{S_{k+1}}^A$ for all $k\ge M_0$.

\begin{thmABC}\label{thm:B}
 Let $\mathcal{S}=(S_k\le \Sym(\Omega_k))_{k\in \mathbb{N}}$ be a good sequence of finite transitive permutation groups and let $G$ be the infinitely iterated wreath product in product action of type $\mS$. Set $N_k=\mathrm{ker}(G\rightarrow \wt{S}_k)$ for $k\in \mathbb{N}$ and $\mathcal{G} = \{N_k\}_{k\in \mathbb{N}}$. Then, for every $\a \in [0,1]$ there is a closed subgroup $H^\a$ of $G$ such that $\dim_{\mathrm{H},\mathcal{G}}(H^\a) = \a$. In particular $\mathrm{Spec}_{\mathrm{H},\mathcal{G}}(G) = [0,1]$.
\end{thmABC}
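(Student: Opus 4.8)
The plan is to construct, for each target dimension $\alpha \in [0,1]$, an explicit closed subgroup $H^\alpha$ whose Hausdorff dimension equals $\alpha$. Let me think about how Hausdorff dimension works in this filtration context and what the key quantities are.

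The Hausdorff dimension with respect to the filtration $\mathcal{G} = \{N_k\}$ is computed as a limit (or liminf/limsup) of the ratios $\log|HN_k/N_k| / \log|G/N_k| = \log[HN_k : N_k]/\log|\wt{S}_k|$, where $|\wt{S}_k| = |G/N_k| = |W^{pa}_k|$. So the dimension measures the asymptotic density of the "profile" of $H$ in the successive quotients $\wt{S}_k$. The sizes $|\wt{S}_k|$ grow by the wreath product recursion $|\wt{S}_{k+1}| = |S_{k+1}|^{|\wt{\Omega}_k|} \cdot |\wt{S}_k|$ or similar — roughly multiplicatively large jumps. To hit any $\alpha$, I need to control $\log[HN_k:N_k]$ relative to $\log|\wt{S}_k|$ at every level.

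The construction I'd attempt is a "telescoping" or coordinate-selection approach at each level. At the top level (the level-$n$ contribution $S_n^{|\wt{\Omega}_{n-1}|}$ in the product action), I choose a subgroup of the base-group factor — for instance, select a subset of the coordinates of the form $S_n^{c_n}$ (a sub-product over a chosen set of $c_n$ tree-coordinates) and take the full $S_n$ there, trivial elsewhere, while retaining everything from lower levels. By tuning the integers $c_n$ (what fraction of the $|\wt{\Omega}_{n-1}|$ coordinates to keep at each level), I control the incremental contribution $\log[H N_{k}: HN_{k-1}]$ precisely. This is the mechanism that lets me target $\alpha$: I essentially want

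\begin{equation*}
\lim_{k\to\infty} \frac{\log[HN_k:N_k]}{\log|\wt{S}_k|} = \alpha,
\end{equation*}

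and I arrange the layer-by-layer contributions so that the partial sums track $\alpha \log|\wt{S}_k|$ as closely as the granularity permits. The \emph{good} hypothesis $|S_k| \le |S_{k+1}|^A$ is exactly what guarantees that no single layer contributes a non-vanishing proportion of the total on its own, i.e. that $\log|\wt{S}_k|$ does not jump by an uncontrollable multiplicative factor; this keeps the ratio from overshooting and forces the limit (not merely liminf/limsup) to exist and equal $\alpha$.

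The key steps in order would be: (i) write down the exact recursion for $|\wt{S}_k|$ and $|\wt{\Omega}_k|$, and verify that $\log|\wt{S}_k|/\log|\wt{S}_{k+1}| \to 0$ under the good hypothesis (so that coarse layer-wise control suffices for a genuine limit); (ii) define $H^\alpha$ as the inverse limit of a compatible chain of finite subgroups $H_k \le W^{pa}_k$, built by a greedy coordinate-selection that keeps $\log|H_k|$ within an additive error $O(\log|S_k|)$ of $\alpha \log|\wt{S}_k|$; (iii) check compatibility of the $H_k$ under the projections $W^{pa}_{k+1} \twoheadrightarrow W^{pa}_k$ so that the inverse limit is a genuine closed subgroup and $H^\alpha N_k = H_k$; and (iv) conclude by taking the limit of the ratios. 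The main obstacle I expect is step (ii)–(iv) together: ensuring that the greedily chosen finite subgroups are simultaneously \emph{nested compatibly} under the inverse system \emph{and} hit the dimension on the nose. Greedy selection is easy level-by-level, but the product-action projection mixes the coordinates, so I must choose the selected coordinate-sets at successive levels coherently (e.g.\ nested under the tree structure) while still having enough freedom to tune each increment — reconciling the rigidity of compatibility with the fine additive control needed for the limit is the delicate point, and it is precisely here that the good hypothesis and the fast growth of $|\wt{S}_k|$ must be used to absorb the rounding errors.
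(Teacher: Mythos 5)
Your overall strategy coincides with the paper's: build $H^\alpha$ as the inverse limit of finite subgroups $H_n\le W^{\mathrm{pa}}_n$ obtained by switching on full coordinate factors of the base group $S_{n+1}^{\wt{m}_n}$ on a selected index set at each level, and use the good hypothesis to show that the top layer dominates $\log\norm{G:N_n}$, so that per-layer control yields a genuine limit equal to $\alpha$. Your step (i) is correct and is exactly what the paper establishes via the estimates $a_{n-1}/a_n\to 0$ and $\sum_{k\le n}a_k/a_n\to 1$. However, the part you yourself flag as ``the delicate point'' --- choosing the coordinate sets so that the $H_n$ are actually subgroups, form a compatible inverse system, and still admit fine enough tuning --- is the real content of the proof, and your proposal does not supply the idea that resolves it. The resolution is: for $H_n\cdot K_{n+1}$ to be a subgroup, where $K_{n+1}$ is a product of coordinate factors $(S_{n+1})_i$, the chosen index set must be invariant under the action of $H_n$ on $\wt{\Omega}_n$, hence a union of $H_n$-orbits. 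One must therefore track, recursively, the orbit structure of the groups being built under the \emph{product} action: the paper shows that $K^\a_{n+1}=\prod_{i\in\mathcal{O}_n}(S_{n+1})_i$ has exactly $c_{n+1}=m_{n+1}^{\,\wt{m}_n-\lfloor\a c_n\rfloor o_n}$ orbits on $\wt{\Omega}_{n+1}$, all of equal size $o_{n+1}$ with $c_{n+1}o_{n+1}=\wt{m}_{n+1}$, and that the union $\mathcal{O}_{n+1}$ of $\lfloor\a c_{n+1}\rfloor$ of them is invariant under all previously constructed layers. This recursion (properties $(a)$--$(c)$ in the paper) is what makes the induction close; compatibility with the projections is then automatic because $K_{n+1}$ lies in the base of $S_{n+1}\,\cwr\,W^{\mathrm{pa}}_n$. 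Without it, ``greedy coordinate selection'' does not produce subgroups.

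A second, quantitative point: your claimed additive error $O(\log\norm{S_k})$ in step (ii) is not achievable by this construction. Since selection proceeds one orbit at a time, the rounding error at level $n+1$ is $\{\a c_n\}\,o_n\log\norm{S_{n+1}}$, i.e.\ as large as $o_n\log\norm{S_{n+1}}$, which vastly exceeds $\log\norm{S_{n+1}}$. What saves the argument is relative, not additive, control: $o_n/\wt{m}_n=1/c_n$ and $c_n\ge\wt{m}_n^{\,1-\a}\to\infty$, so the relative error vanishes. Relatedly, your gloss on the good hypothesis is backwards: the last layer contributes asymptotically \emph{all} of $\log\norm{G:N_n}$; what goodness excludes is an early $S_k$ so large that $\log\norm{S_k}/\log\norm{S_{k+1}}$ is unbounded, which would destroy the domination of the partial sums by their last term.
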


\begin{rmk*}
 \begin{enumerate}
  \item We do not know if Theorem~\ref{thm:B} still holds for non-good sequences. 
  \item We would like to point out that in the proof of the Theorem~\ref{thm:B} we explicitly construct the subgroup $H^\a$ of the given Hausdorff dimension. In fact, our arguments are purely combinatorial.
  \item In general, the Hausdorff dimension of a profinite group might depend on the chosen filtration. On the other hand, it is easy to see that the filtration $\mathcal{G}$ considered above is the unique maximal filtration of open normal subgroups of $G$.
  \item  We also point out that, in general, the closed subgroups constructed in Theorem~\ref{thm:B} are not finitely generated.
 \end{enumerate}
\end{rmk*}
 
As already mentioned, infinitely iterated wreath products in product action associated to sequences of finite non-abelian transitive permutation groups are hereditarily just infinite. So we readily obtain the following Corollary.

\begin{corABC}
There are hereditarily just infinite profinite groups with complete Hausdorff dimension spectrum.
\end{corABC}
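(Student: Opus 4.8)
The plan is to reduce the statement to a purely arithmetic problem about the successive layers of the filtration $\mathcal{G}$ and then to realise each prescribed density by an explicit inverse limit, exactly as in the combinatorial construction advertised in the remark.

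First I would record the shape of the filtration. Writing $d_{k-1}=\norm{\wt{\Omega}_{k-1}}$ for the degree of the product action of $W^{\mathrm{pa}}_{k-1}$, the kernel of $W^{\mathrm{pa}}_k\twoheadrightarrow W^{\mathrm{pa}}_{k-1}$ is the top base group, so $N_{k-1}/N_k\cong S_k^{d_{k-1}}$ and $\log\norm{G:N_k}=\sum_{j\le k}d_{j-1}\log\norm{S_j}$. Since $d_{j}=\norm{\Omega_{j}}^{d_{j-1}}$ grows like a tower of exponentials, the last summand dominates: $\sum_{j<k}d_{j-1}\log\norm{S_j}=o\bigl(d_{k-1}\log\norm{S_k}\bigr)$. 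For a closed subgroup $H$ put $L_k=(H\cap N_{k-1})N_k/N_k\le S_k^{d_{k-1}}$ and set $a_k=\log\norm{L_k}$, $b_k=d_{k-1}\log\norm{S_k}$. Telescoping $\norm{H:H\cap N_k}=\prod_{j\le k}\norm{L_j}$ together with the domination above gives $\dim_{\mathrm{H},\mathcal{G}}(H)=\liminf_k a_k/b_k$; thus it suffices to build $H$ with $a_k/b_k\to\a$.

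Next I would construct $H$ as an inverse limit of subgroups $H_k\le W^{\mathrm{pa}}_k$ with surjective bonding maps, defined recursively by $H_0=W^{\mathrm{pa}}_0$ and $H_k=S_k^{T_k}\rtimes H_{k-1}$, where $T_k\subseteq\wt{\Omega}_{k-1}$ is an $H_{k-1}$-invariant subset and $H_{k-1}$ is identified with its canonical lift in $W^{\mathrm{pa}}_k$. Invariance of $T_k$ guarantees that $H_{k-1}$ normalises $S_k^{T_k}=\{f:\ f(\omega)=1\ \text{for}\ \omega\notin T_k\}$, so each $H_k$ is a genuine subgroup with $H_k\twoheadrightarrow H_{k-1}$ and $L_k=S_k^{T_k}$. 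Consequently $a_k=\norm{T_k}\log\norm{S_k}$, and the whole problem collapses to choosing, at each level, an $H_{k-1}$-invariant subset $T_k$ with $\norm{T_k}=\a d_{k-1}+o(d_{k-1})$; then $a_k/b_k=\norm{T_k}/d_{k-1}\to\a$ and $H=\varprojlim H_k$ has the required dimension. The cases $\a=0$ and $\a=1$ are handled by $H=1$ and $H=G$.

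The main obstacle is therefore the combinatorial step: producing $H_{k-1}$-invariant subsets of $\wt{\Omega}_{k-1}$ of essentially any prescribed size, which one cannot do freely because $W^{\mathrm{pa}}_{k-1}$ is transitive on $\wt{\Omega}_{k-1}$. I would attack this by estimating the $H_{k-1}$-orbits on $\wt{\Omega}_{k-1}=\Omega_{k-1}^{\wt{\Omega}_{k-2}}$. Since $L_{k-1}=S_{k-1}^{T_{k-1}}$ moves only the coordinates in $T_{k-1}$, and there transitively on $\Omega_{k-1}$, while $H_{k-2}$ merely permutes the $\wt{\Omega}_{k-2}$-coordinates, every orbit has size at most $\norm{\Omega_{k-1}}^{\norm{T_{k-1}}}\norm{H_{k-2}}=d_{k-1}^{\,\a+o(1)}$, which is $o(d_{k-1})$ once $\a<1$. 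Hence $\wt{\Omega}_{k-1}$ splits into at least $d_{k-1}^{\,1-\a-o(1)}$ orbits, each of vanishing relative size, and a greedy union of orbits reaches any target within relative error $o(1)$, yielding the desired $T_k$. It is precisely in making these orbit and partial-sum estimates uniform across all levels that the goodness hypothesis $\norm{S_k}\le\norm{S_{k+1}}^A$ is invoked, which is consistent with our not knowing whether it can be dropped.
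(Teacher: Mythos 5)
The genuine gap is that the statement you are proving is the Corollary, which asserts the existence of \emph{hereditarily just infinite} profinite groups with complete spectrum, and your proposal never touches the hereditarily just infinite part: it is, in substance, a proof of Theorem~\ref{thm:B} for an arbitrary good sequence. To deduce the Corollary you must in addition (i) exhibit a good sequence $\mathcal{S}$ of finite \emph{non-abelian simple} transitive permutation groups --- e.g.\ the constant sequence $S_k=\mathrm{Alt}(5)$ in its natural action, which is trivially good --- and (ii) invoke the fact (\cite[Theorem~6.2]{Re12} together with \cite{Va15}) that for such sequences $W^{\mathrm{pa}}(\mathcal{S})$ is hereditarily just infinite. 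Without step (ii) you have only produced profinite groups with complete spectrum, not hereditarily just infinite ones; nothing in your construction addresses just infiniteness.

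As for the spectrum construction itself, it follows the paper's proof of Theorem~\ref{thm:B} quite closely: the same reduction of $\dim_{\mathrm{H},\mathcal{G}}(H)$ to $\liminf_k a_k/b_k$ via domination of the top layer, and the same layer-by-layer definition $H_k=S_k^{T_k}\rtimes H_{k-1}$ with $T_k$ an invariant coordinate set of density roughly $\a$ (your $T_k$ is the paper's $\mathcal{O}_{k-1}$ and your $S_k^{T_k}$ is the paper's $K_k^\a$). Your one real variation is in securing invariance: you take $T_k$ to be a greedy union of orbits of the \emph{whole} group $H_{k-1}$, using only an upper bound on orbit sizes, whereas the paper computes the orbits of the top layer $K_{k-1}^\a$ exactly ($c_{k-1}$ orbits, each of size $o_{k-1}$) and then checks invariance under the lower layers separately via Lemma~\ref{lem:complement}; your variant is legitimate and arguably cleaner on that point. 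One imprecision to fix: the domination $\sum_{j<k}d_{j-1}\log\norm{S_j}=o\bigl(d_{k-1}\log\norm{S_k}\bigr)$ does \emph{not} follow from the tower growth of the degrees alone, since $\norm{S_j}$ for $j<k$ could dwarf $\norm{S_k}$; this is precisely where goodness is needed (the paper's Claims 1--3), yet you assert the domination unconditionally at the outset and only acknowledge the role of goodness in passing at the end.
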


\section{Preliminaries}
\subsection{Hausdorff dimension of profinite groups}\label{subsec:hausdorff}
Let $G$ be a profinite group. A \emph{filtration} of $G$ is a chain $(G_i)_{i\in \mathbb{N}}$ of open subgroups $G_i$ of $G$ such that $\bigcap_{i\in \mathbb{N}} G_i=1$.

\begin{defn}
 Let $G$ be a countably based profinite group and let $H$ be a closed subgroup of $G$. Fix a filtration $\mathcal{G}=(G_n)_{n\in \mathbb{N}}$ of open normal subgroups $G_n$ of $G$. The \emph{Hausdorff dimension} of $H$ (with respect to $\mathcal{G}$) is the real number 
 \[
   \dim_{\mathrm{H},\mathcal{G}}(H) = \liminf_{n\rightarrow \infty} \frac{\log\norm{H G_n:G_n}}{\log\norm{G:G_n}}.
 \]
\end{defn}

In \cite{barnea:hausdorff} it is proved that the previous definition coincides with the usual definition of Hausdorff dimension of a profinite group seen as a metric space with the metric induced by the filtration $\mathcal{G}$. See \cite{barnea:hausdorff} for more details on Hausdorff dimension of pro-$p$ groups.

\begin{defn}
 Let $G$, $H$ and $\mathcal{G}$ be as above. The \emph{spectrum} of $G$ (with respect to $\mathcal{G}$) is the set 
 \[
   \mathrm{Spec}_{\mathrm{H},\mathcal{G}}(G) = \{ \dim_{\mathrm{H},\mathcal{G}}(H) \vvert H\le_c G \}
 \]
\end{defn}

It is clear that $\{0,1\}\subseteq \mathrm{Spec}_{\mathrm{H},\mathcal{G}}(G)\subseteq [0,1]$. We will say that a profinite group $G$ has \emph{complete spectrum} (with respect to $\mathcal{G}$) if $\mathrm{Spec}_{\mathrm{H},\mathcal{G}}(G) = [0,1]$.

\subsection{Infinitely iterated iterated wreath products in product action}\label{sec:1}
This section is devoted to the definition of the family of hereditarily just infinite groups introduced in \cite{Va16}. All the actions considered will be \emph{right} actions. 
A \emph{permutation group} is a subgroup of the symmetric group $\Sym(\Omega)$ on some set $\Omega$. For two permutation groups $A\le \Sym(\Omega)$ and $B\le \Sym(\Lambda)$, we denote by $A\cwr B\le \Sym(\Omega^{\norm{\Lambda}})$ the wreath product of $A$ by $B$ with respect to the product action.\footnote{The product action of the wreath product can also be defined on functions $\Omega^\Lambda$, but identifying a function $f:\Lambda = \{\lambda_1,\ldots,\lambda_n\} \to \Omega$ with the $\norm{\Lambda}$-tuple $(f(\lambda_1),\ldots,f(\lambda_n))$ gives an equivalence of permutation groups.}

\begin{defn}
Let $\mS = (S_k)_{k\in \mathbb{N}}$ be a sequence of finite
  permutation groups $S_k \le \Sym(\Omega_k)$.
Define inductively $\wt{\Omega}_1 = \Omega_1$ and
  $\wt{\Omega}_{n+1} = \Omega_{n+1}^{\, \norm{\wt{\Omega}_{n}}}$ for $n \ge 1$.
  The \emph{$n$th iterated wreath product
    $W^\mathrm{pa}_n \le \Sym(\wt{\Omega}_n)$ of type
    $\mS_n = (S_1, \ldots, S_{n})$ with respect to product actions} is given by
  \begin{align*}
    W^\mathrm{pa}_1 & = W^\mathrm{pa}(\mS_1) = S_1 \le \Sym(\wt{\Omega}_1), \\
    W^\mathrm{pa}_{n+1} & = W^\mathrm{pa} (\mS_{n+1}) = S_{n+1} \,\cwr\,
                      W^\mathrm{pa}_{n} \le  \Sym(\wt{\Omega}_{n+1})
                      \qquad \text{for $n \ge 1$}.  
  \end{align*}
  The \emph{infinitely iterated wreath product of type $\mS$ with respect to
    product actions} is the inverse limit
  $W^\mathrm{pa}(\mS) = \varprojlim W^\mathrm{pa}_n$ of the natural
  inverse system
  $W^\mathrm{pa}_1 \twoheadleftarrow W^\mathrm{pa}_2 \twoheadleftarrow
  \ldots$.
\end{defn}

A profinite group $G$ is said to be \emph{just infinite} if it is infinite and every non-trivial closed normal subgroup is open. By \cite[Theorem~3]{grig:branch}, a just infinite profinite group is either a \emph{branch group} or it is virtually a direct power of a \emph{hereditarily just infinite} profinite group. A just infinite group $G$ is hereditarily just infinite if every open subgroup of $G$ is just infinite. While branch groups received a considerable amount of attention in the past, comparatively little is known about hereditarily just infinite groups. In particular, the only known examples of non-(virtually pro-$p$) hereditarily just infinite groups are the groups defined above and the family of examples described in \cite{lucchini:a2gen}. Both these families of examples are obtained via inverse limits of iterated wreath products. 

\section{Notation and some lemmata}
We start this section by fixing some notation.
\begin{notn} \label{not:filtration}
  For a number $x\in \mathbb{R}$ we will write 
 \[
   \lfloor x \rfloor = \max\left\{ n \in \mathbb{Z}\vvert n\le x\right\} \mbox{\quad and \quad} \{x\} =x - \lfloor x \rfloor.
 \]
 for the integer part and the fractional part of $x$, respectively.
 \end{notn}

\begin{notn}\label{not:ntuple}
 Let $T$ be a group,  $n\in \mathbb{N}$ and $1\le i \le n$ be an index. We will write
\[
  T_i = \left\{ (t_1,\ldots,t_n) \in T^n \vvert  t_j=e  \mbox{ for $j\neq i$}\right\} \le T^n,
\]
that is the $i$-th coordinate subgroup of $T^{n}$.
\end{notn}

 Before the proof of Theorem~\ref{thm:B} we give a few ancillary lemmas.
  The following lemma is straightforward.
 \begin{lem}\label{lem:complement}
  Let $G\le \Sym(\Omega)$ be a permutation group and set $S$ be a subset of $\Omega$. Then $S$ is $G$-invariant if and only if the complement of $S$ in $\Omega$ is $G$-invariant.
 \end{lem}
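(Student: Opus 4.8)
The plan is to exploit the single fact that every element of $G$ acts on $\Omega$ as a \emph{bijection}, and that bijections interact well with set complements. Recall that $S$ being $G$-invariant means, in the right-action convention of the paper, that $S \cdot g = S$ (as a setwise equality) for every $g \in G$. So it suffices to prove the corresponding statement for a single permutation and then quantify over all $g \in G$.

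Concretely, first I would fix $g \in G$ and recall that the map $\varphi_g \colon \Omega \to \Omega$, $\omega \mapsto \omega \cdot g$, is a bijection of $\Omega$ (this is the defining property of $G \le \Sym(\Omega)$). The elementary observation I would then record is that for \emph{any} bijection $\varphi$ of $\Omega$ and any subset $S \subseteq \Omega$ one has $\varphi(\Omega \setminus S) = \Omega \setminus \varphi(S)$; indeed $\omega \in \varphi(\Omega \setminus S)$ iff $\varphi^{-1}(\omega) \notin S$ iff $\omega \notin \varphi(S)$, using injectivity and surjectivity of $\varphi$ respectively. Applying this with $\varphi = \varphi_g$ gives $(\Omega \setminus S) \cdot g = \Omega \setminus (S \cdot g)$.

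From this identity the equivalence is immediate: if $S \cdot g = S$, then $(\Omega \setminus S)\cdot g = \Omega \setminus (S\cdot g) = \Omega \setminus S$, so invariance of $S$ under $g$ forces invariance of $\Omega \setminus S$ under $g$. Quantifying over all $g \in G$ shows that $G$-invariance of $S$ implies $G$-invariance of $\Omega \setminus S$. The converse requires no separate argument: it follows by symmetry, replacing $S$ by $\Omega \setminus S$ and using $\Omega \setminus (\Omega \setminus S) = S$.

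There is essentially no obstacle here; the lemma is genuinely routine, as the authors indicate. The only point worth stating cleanly is that permutations preserve complements, which is why the hypothesis that $G \le \Sym(\Omega)$ (rather than merely acting by arbitrary maps) is used. I would therefore keep the write-up to a couple of lines, phrasing it either as the bijection identity above or, even more briefly, by noting that a permutation of $\Omega$ fixes $S$ setwise precisely when it fixes its complement setwise.
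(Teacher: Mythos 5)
Your proof is correct; the paper itself gives no argument for this lemma, simply declaring it straightforward, and your bijection-preserves-complements argument is exactly the routine verification the authors had in mind. Nothing to add.
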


 The next lemmas are of analytical flavour.
 
 \begin{lem}\label{lem:mn}
  Let $(m_k)_{k\in\mathbb{N}}$ be a sequence of positive integers with $m_k\ge 2$ for every $k$. Let $\wt{m}_1 = m_1$ and $\wt{m}_{k+1}= m_{k+1}^{\wt{m}_k}$ for $k\ge 1$. Then 
  \begin{enumerate}
  \item for every $n\in \mathbb{N}$, $\wt{m}_n \ge n$. In particular, $\lim_{n\rightarrow \infty} \wt{m}_n = \infty$;
   \item  $\displaystyle\lim_{n\rightarrow \infty}  \wt{m}_{n-1} /\wt{m}_n = 0$;
  \item for every positive constant $C$ there exists $M= M(C)\in \mathbb{N}$ such that $C\wt{m}_{n-1} \le \wt{m}_n$ for every $n\ge M$.
  \end{enumerate}
  \end{lem}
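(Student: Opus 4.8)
The plan is to reduce all three statements to a single elementary device. Since every $m_k \ge 2$, the recursion yields the uniform exponential lower bound
\[
  \wt{m}_n = m_n^{\wt{m}_{n-1}} \ge 2^{\wt{m}_{n-1}} \qquad (n \ge 2),
\]
and I would play this off against the standard fact that $t/2^t \to 0$ as $t \to \infty$ (equivalently, that exponential growth dominates linear growth). Everything else is bookkeeping.

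First I would prove (1) by induction on $n$. The base case $\wt{m}_1 = m_1 \ge 2 \ge 1$ is immediate. For the inductive step, assuming $\wt{m}_n \ge n$, the bound above gives $\wt{m}_{n+1} \ge 2^{\wt{m}_n} \ge 2^n \ge n+1$, where the last inequality is the elementary $2^n \ge n+1$, valid for all $n \ge 0$. The limit assertion $\wt{m}_n \to \infty$ then follows at once from $\wt{m}_n \ge n$, and this is the crucial output of (1) that feeds the remaining parts.

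Next, for (2) and (3) I would combine the exponential lower bound with (1). Setting $x = \wt{m}_{n-1}$, the bound gives
\[
  \frac{\wt{m}_{n-1}}{\wt{m}_n} \le \frac{x}{2^x},
\]
and since $x = \wt{m}_{n-1} \to \infty$ by (1) while $x/2^x \to 0$, statement (2) follows. For (3), fix $C > 0$; because $2^x/x \to \infty$ there is an $X_0$ with $2^x \ge Cx$ for all $x \ge X_0$, and by (1) there is an $M = M(C)$ with $\wt{m}_{n-1} \ge X_0$ whenever $n \ge M$. Then for such $n$ the lower bound yields $\wt{m}_n \ge 2^{\wt{m}_{n-1}} \ge C \wt{m}_{n-1}$, as required.

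I do not anticipate any genuine obstacle: the lemma is routine once the order of operations is right. The only point deserving a moment's care is to secure (1) first, since it is exactly the divergence $\wt{m}_{n-1} \to \infty$ that makes the ``exponential beats linear'' limits in (2) and (3) applicable. It is worth noting that (2) and (3) are essentially equivalent reformulations of $\wt{m}_n/\wt{m}_{n-1} \to \infty$, so once the displayed inequality and (1) are in hand they require no further ideas.
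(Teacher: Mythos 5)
Your proof is correct and follows essentially the same route as the paper: induction for (1), the bound $\wt{m}_{n-1}/\wt{m}_n \le x/2^x$ with $x=\wt{m}_{n-1}\to\infty$ for (2), and the same comparison for (3). You actually spell out part (3) more explicitly than the paper, which simply declares it clear from the preceding parts.
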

  
  \begin{proof}  
  \begin{enumerate}
   \item By induction on $n$. We have $m_1 \ge 2 \ge 1$. Suppose $\wt{m}_{n-1} \ge n-1$ for $n\ge 2$, then $m_n^{\wt{m}_{n-1}} \ge 2^{\wt{m}_{n-1}} \ge 2^{n-1} \ge n$.

   \item Let $x= \wt{m}_{n-1}$, then $0 \le x/m_n^x \le x/2^x$ for every $n\in \mathbb{N}$. Passing to the limits we obtain the claim.
\item Clear from the above. \endproof
  \end{enumerate}
\end{proof}

The previous lemma describes the very fast growth of the function $k\mapsto \wt{m}_k$. The next lemma is a standard result and it can be found in any basic text of Calculus.
  
 \begin{lem}\label{lem:liminf}
  Let $(a_n)_{n\in \mathbb{N}}$ and $(b_n)_{n\in \mathbb{N}}$ two bounded real sequences. Suppose that $(a_n)_{n\in \mathbb{N}}$ converges to $0$, then 
  \[
    \lim_{n\rightarrow \infty} a_n b_n = 0.
  \]
 \end{lem}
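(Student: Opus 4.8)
The plan is to reduce the whole statement to the squeeze theorem, exploiting the observation that a convergent sequence multiplied by a \emph{bounded} sequence stays controlled. First I would use only the boundedness of $(b_n)_{n\in\mathbb{N}}$: fix a constant $B>0$ with $\norm{b_n}\le B$ for all $n\in\mathbb{N}$. Note that the boundedness of $(a_n)_{n\in\mathbb{N}}$ is not actually required, since any sequence converging to $0$ is automatically bounded; it costs nothing to retain it in the hypotheses, but the active assumption is that $(b_n)$ is bounded.

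Next I would estimate the product pointwise and pass to the limit. For every $n\in\mathbb{N}$ we have
\[
  0 \le \norm{a_n b_n} = \norm{a_n}\,\norm{b_n} \le B\,\norm{a_n}.
\]
Because $a_n \to 0$ we have $\norm{a_n}\to 0$, and hence $B\,\norm{a_n}\to 0$. The squeeze theorem applied to the chain of inequalities above then forces $\norm{a_n b_n}\to 0$, which is equivalent to $a_n b_n \to 0$, as claimed.

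I do not anticipate any genuine obstacle: this is the classical ``bounded times null is null'' lemma, and the sole point meriting a moment's care is to bound the product using $(b_n)$ rather than $(a_n)$, so that it is precisely the factor tending to zero that governs the limit. In the intended application the roles are exactly these—one factor (coming from ratios such as those controlled in Lemma~\ref{lem:mn}) tends to $0$, while the other is a bounded fractional-part term—so this elementary lemma suffices to conclude.
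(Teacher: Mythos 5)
Your proof is correct: bounding $\norm{b_n}$ by a constant $B$ and squeezing $\norm{a_n b_n}\le B\norm{a_n}\to 0$ is exactly the standard ``bounded times null is null'' argument, and your remark that boundedness of $(a_n)$ is redundant is also accurate. The paper itself offers no proof at all---it simply declares the lemma a standard calculus fact---so your write-up supplies precisely the argument the authors had in mind.
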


 \section{Proof of Theorem~\ref{thm:B}} 
 Set $N_k=\mathrm{ker}(G\rightarrow \wt{S}_k)$ for $k\in \mathbb{N}$, then $\mathcal{G} = (N_k)_{k\in \mathbb{N}}$ be the unique maximal descending chain of open normal subgroups of $G$. Set $\dim_{\mathrm{H}} = \dim_{\mathrm{H},\mathcal{G}}$. Clearly, $\dim_{\mathrm{H}}(\{1\}) = 0$ and $\dim_{\mathrm{H}}(G) = 1$. To prove the theorem it will be sufficient to build subgroups $H^\a_n$ of $W^\mathrm{pa}_{n}$, for $n\in \mathbb{N}$, such that $H_{n+1}^\a $ projects onto $H_n^\a$ and 
   \begin{equation}\label{eq:quant}
   \lim_{n\rightarrow \infty} \frac{\log\norm{H^\a_n}}{\log\norm{G:N_n}} = \a
  \end{equation}
    for every $\a\in (0,1)$. 
  
  For ease of notation, set $m_k = \norm{\Omega_k}$, $\wt{m}_0=1$ and $\wt{m}_k =\norm{\wt{\Omega}_k}$. Observe that the equality $\wt{m}_{k+1}=m_{k+1}^{\wt{m}_k}$ holds for every $k\ge 1$.
  
  Fix $\a \in (0,1)$. We are going to define subgroups $H_n^\alpha$ ``layer by layer'', i.e., we will define subgroups $K_j^\alpha \le S_j^{\wt{m}_{j-1}}$, for $j=2,\ldots,n$, and then we will set $H_n^\alpha = \prod_{j=2}^n K_j^\alpha$. 
   Remembering Notation~\ref{not:ntuple}, define $c_1= m_1$, $o_1 = 1$ and
  \[
    K^\a_2 = \prod_{i=1}^{\left\lfloor \a \cdot m_1\right\rfloor} (S_2)_i \le W^{\mathrm{pa}}_2 \le \Sym(\wt{\Omega}_2).
  \]
  By the definition of the product action, it is clear that $K^\a_2$ has $c_2 = m_2^{m_1- \left\lfloor \a \cdot c_1\right\rfloor \cdot o_1} = m_2^{m_1- \left\lfloor \a \cdot m_1\right\rfloor}$ orbits on $\wt{\Omega}_2$ and each orbit of $K^\a_2$ has the same cardinality $o_2 =  m_2^{\left\lfloor \a \cdot c_1\right\rfloor \cdot o_1}$.
  
  Let $n\ge 2$ and assume that we defined subgroups $K^\a_j $ of $S_j^{\wt{m}_{j-1}}$, for $j=2,\ldots,n$, such that: 
  \begin{enumerate}
   \item[$(a)$] $K^\a_j$ has exactly $c_j$ orbits $\{O_j(1),\ldots,O_j(c_j)\}$ for its action on $\wt{\Omega}_{j}$ and each orbit has cardinality $o_j$;
   \item[$(b)$] $\wt{m}_j = c_j \cdot o_j$ and
   \item[$(c)$]  
   the subset $\mathcal{O}_n = \displaystyle\bigcup_{i=1}^{\left\lfloor \a \cdot c_{n}\right\rfloor} O_{n}(i) \subset \wt{\Omega}_n$ is $K_j^\a$-invariant.
  \end{enumerate}
  Define a new subgroup $K^\a_{n+1}$ of $S_{n+1}^{\wt{m}_n}$ by
  \comm{
  \[
   K^\a_{n+1} = \prod_{i=1}^{\left\lfloor \a \cdot c_n\right\rfloor} S_{n+1}^{\wt{m}_n}(O_n(i))  \le W^{\mathrm{pa}}_{n+1} \le \Sym(\wt{\Omega}_{n+1}).
  \]
  \[
     K^\a_{n+1}  =  \prod_{i=1}^{\left\lfloor \a \cdot c_n\right\rfloor}  \left\{ (s_1,\ldots,s_{\wt{m}_n}) \in S_{n+1}^{\wt{m}_n} \vvert  s_i=e  \mbox{ for $i\notin O_n(i)$} \right\}
  \]}
  \[
    K^\a_{n+1}  =  \prod_{i \in \mathcal{O}_n} (S_{n+1})_i \le W^{\mathrm{pa}}_{n+1} \le \Sym(\wt{\Omega}_{n+1}),
  \]
By definition of product action, the number of orbits of $K^\a_{n+1}$ on $\wt{\Omega}_{n+1}$ corresponds to the number of possible choices for the coordinates of $\Omega_{n+1}^{\wt{m}_n}$ that are not moved by $K^\a_{n+1}$, that is $(m_{n+1})^{\wt{m}_n - \left\lfloor \a \cdot c_n\right\rfloor \cdot o_n}$. Furthermore, the size of an orbit of $K_{n+1}^\a$ on $\wt{\Omega}_{n+1}$ will simply be
  \[
    o_{n+1} = \frac{\wt{m}_{n+1}}{(m_{n+1})^{\wt{m}_n - \left\lfloor \a \cdot c_n\right\rfloor \cdot o_n}} = (m_{n+1})^{\left\lfloor \a \cdot c_n\right\rfloor \cdot o_n}.
  \]
  Therefore $K^\a_{n+1}$ satisfies properties $(a)$ and $(b)$, we will prove that $K^\a_{n+1}$ and $\mathcal{O}_{n+1}$ also satisfy property $(c)$. It is clear that $\mathcal{O}_{n+1}$ is $K_{n+1}^\a$-invariant.
  Set $C = \wt{\Omega}_n \setminus \mathcal{O}_n$. 
  By definition of $K_{n+1}^\a$, for any orbit $O$ of $K_{n+1}^\a$ on $\wt{\Omega}_{n+1}$ and for any $c\in C$ there exist $f_c\in \Omega_{n+1}$ such that   
  \[
    O = \left\{(x_1,\ldots,x_{\wt{m}_n}) \in \wt{\Omega}_{n+1} \vvert x_c = f_c \mbox{ for } c\in C \right\}.
  \]
  It follows that there is a bijection between orbits of $K_{n+1}^\a$ on $\wt{\Omega}_{n+1}$ and the set $ \Omega_{n+1}^C$. By Lemma~\ref{lem:complement}, property $(c)$ yields that $C$ is $K_j^\a$-invariant for every $j=2,\ldots,n-1$ and this implies that $\mathcal{O}_{n+1}$ is $K_j^\a$-invariant. Therefore, $K_{n+1}^\a$ satisfies $(c)$.
  
 Set $H_2^\a = K_2^\a$ and $H_{n+1}^\a = H_n^\a \cdot K^\a_{n+1}$ for $n\ge 2$. By property $(c)$, it follows readily that $H_n^\a$ is a subgroup of $W_n^{\mathrm{pa}}$. Also, by construction, $H^\a_{n+1}$ projects onto $H^\a_n$.  Set $H^\a = \varprojlim H_n^\a$.
  
 The closed subgroup $H^\a$ of $G$ is our candidate to have Hausdorff dimension $\a$ in $G$ (with respect to $\mathcal{G}$). In the rest of the proof we will prove that this is indeed the case. 
 An algebraic manipulation yields that
  \[
   \log \norm{H_n^\a} = \log \left(\prod_{k=2}^{n} \norm{S_k}^{\left\lfloor \a \cdot c_{k-1}\right\rfloor \cdot o_{k-1}} \right) = \sum_{k=2}^n \left\lfloor \a \cdot c_{k-1}\right\rfloor \cdot o_{k-1}  \log\norm{S_k}
  \]
  and, defining $\wt{m}_0=1$,
  \[
   \log \norm{G:N_n} = \log \left(\prod_{k=1}^{n} \norm{S_k}^{\wt{m}_{k-1}} \right) = \sum_{k=1}^n \wt{m}_{k-1} \log\norm{S_{k}}.
  \]
  We have to carefully study the asymptotics of the previous sequences. 
  First we determine the asymptotic behaviour of the sequence $(\left\lfloor\a c_n \right\rfloor o_n/\wt{m}_n)_{n\ge 1}$. Observe that 
  \[
   \frac{\left\lfloor\a \cdot c_n \right\rfloor \cdot o_n}{\wt{m}_n} = \frac{\a \cdot c_n \cdot o_n - \left\{\a \cdot c_n \right\} \cdot o_n }{\wt{m}_n} = \a - \frac{\left\{\a \cdot c_n \right\} \cdot o_n }{\wt{m}_n}
  \]
  and, by definition of $c_n$ and Lemma~\ref{lem:mn}, $c_n\ge \wt{m}_n^{(1-\a)} \ge n^{(1-\a)}$ which tends to infinity as $n$ does. Moreover, remember that $o_n/\wt{m}_n = 1/c_n$. Therefore
 \begin{equation}\label{eq:limit2}
   \lim_{n\rightarrow \infty} \frac{\left\lfloor\a \cdot c_n \right\rfloor \cdot o_n}{\wt{m}_n} = \a.
 \end{equation}

  We are going to show next that 
  \[
  \lim_{n\to \infty} \frac{\sum_{k=2}^n \left\lfloor \a \cdot c_{k-1}\right\rfloor \cdot o_{k-1}  \log\norm{S_k}}{ \left\lfloor \a \cdot c_{n-1}\right\rfloor \cdot o_{n-1}  \log\norm{S_n}} = 1.
  \]

  For the sake of brevity, set $$a_k = \left(\a \wt{m}_{k-1} \log\norm{S_{k}}\right)_{k\ge 2} \text{\quad and \quad} b_k = \left(\left\lfloor \a  c_{k-1}\right\rfloor o_{k-1} \log\norm{S_k}\right)_{k\ge 2},$$ then it is clear that $b_k\le a_k$, for every $k\ge 2$. By \eqref{eq:limit2}, the sequence $(b_n/a_n)_{n\ge 2}$ tends to $1$ as $n$ tends to infinity. 
    
  In the next series of claims we will show that $(\sum_{k=2}^n a_k)_{n\ge 2}$ ``behaves asymptotically'' like $(a_n)_{n\ge 2}$. Recall that we are assuming that there are $M_0\in \mathbb{N}$ and $A>0$ such that, for all $k\ge M_0$, $ \log\norm{S_k}\le A \log\norm{S_{k+1}}$.
  
 \begin{claim}
 For every real constant $C>0$ there exists $M(C)\in \mathbb{N}$ such that  
 \begin{equation}\label{eq:ak}
 C a_{k-1} \le a_k \text{\quad for every $k\ge M(C)$}.
 \end{equation}
  \begin{proof}
  By Lemma~\ref{lem:mn}, there exists a natural number $L =L(C A)$ such that $C A \wt{m}_{k-2} \le \wt{m}_{k-1}$ for every $k\ge L$. Thus, $$Ca_{k-1} \le C A \a \wt{m}_{k-2} \log \norm{S_{k}}  \le  a_k,$$ for every $k\ge M(C) = \max\{L,M_0+1\}$. \qedhere
 \end{proof}
 \end{claim}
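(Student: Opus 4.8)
The plan is to reduce the desired inequality $C a_{k-1} \le a_k$ to a purely arithmetic statement about the growth of the tower $(\wt{m}_k)$, which is then settled by Lemma~\ref{lem:mn}. Unwinding the definitions, $a_{k-1} = \a\, \wt{m}_{k-2}\log\norm{S_{k-1}}$ and $a_k = \a\, \wt{m}_{k-1}\log\norm{S_k}$, so the common positive factor $\a$ cancels and it suffices to establish
\[
  C\, \wt{m}_{k-2}\log\norm{S_{k-1}} \le \wt{m}_{k-1}\log\norm{S_k}
\]
for all sufficiently large $k$. Notice that the two sides carry different factors $\log\norm{S_{k-1}}$ and $\log\norm{S_k}$, so the first task is to bring them into a comparable form.

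First I would invoke the hypothesis that $\mS$ is \emph{good} precisely to remove this mismatch. For $k-1 \ge M_0$ the good condition gives $\log\norm{S_{k-1}} \le A\log\norm{S_k}$, so the left-hand side is bounded above by $C A\, \wt{m}_{k-2}\log\norm{S_k}$. After this substitution both sides share the factor $\log\norm{S_k}$, and the inequality becomes equivalent to the clean arithmetic estimate $C A\, \wt{m}_{k-2} \le \wt{m}_{k-1}$, with no reference to the $S_k$ at all.

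This last estimate is exactly the content of Lemma~\ref{lem:mn}(3) applied with the constant $CA$: it furnishes a threshold $L = L(CA)$ beyond which $CA\,\wt{m}_{k-2} \le \wt{m}_{k-1}$ holds. Setting $M(C) = \max\{L, M_0+1\}$ then makes the good-sequence bound and the tower estimate valid simultaneously, and chaining them yields $C a_{k-1} \le a_k$ for all $k \ge M(C)$. I do not expect any genuine obstacle here: the whole point is that the tower $\wt{m}_k$ grows so fast that $\wt{m}_{k-1}/\wt{m}_{k-2} \to \infty$ (this is the qualitative message of Lemma~\ref{lem:mn}), so it eventually dwarfs any fixed constant $CA$, while goodness of $\mS$ is exactly what keeps the ratio $\log\norm{S_{k-1}}/\log\norm{S_k}$ bounded so that the super-exponential growth of the tower is not swamped by the fluctuating sizes of the $S_k$.
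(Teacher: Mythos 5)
Your proposal is correct and follows essentially the same route as the paper: bound $\log\norm{S_{k-1}}$ by $A\log\norm{S_k}$ using goodness of $\mS$, reduce to $CA\,\wt{m}_{k-2}\le\wt{m}_{k-1}$ via Lemma~\ref{lem:mn}(3) applied with constant $CA$, and take $M(C)=\max\{L,M_0+1\}$. No differences worth noting.
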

 
 \begin{claim}
 For all $n\ge M(2)$,
 \begin{equation}\label{eq:sumlessthan2}
  \sum_{k=M(2)}^{n} a_k \le 2 a_{n}.
 \end{equation}
 \begin{proof}
 This will be proved by induction on $n$. It is clear that $a_{M(2)}\le 2a_{M(2)}$. Suppose by inductive hypothesis that $\sum_{k=M(2)}^{n-1} a_k \le 2 a_{n-1}$, then \eqref{eq:ak} yields $$\sum_{k=M(2)}^n a_k \le 2 a_{n-1} + a_n\le 2a_n,$$ for $n\ge M(2)$. \qedhere
 \end{proof}
 \end{claim}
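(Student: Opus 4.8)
The plan is to prove \eqref{eq:sumlessthan2} by induction on $n$, using only the growth estimate \eqref{eq:ak} specialised to the constant $C=2$. The first thing I would note is that every term $a_k = \a\,\wt{m}_{k-1}\log\norm{S_k}$ is strictly positive, so that inequalities may be added and compared freely without worrying about signs; this makes the whole argument a purely formal manipulation rather than a delicate analytic estimate.

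For the base case $n=M(2)$ the sum on the left consists of the single term $a_{M(2)}$, and the desired bound $a_{M(2)}\le 2a_{M(2)}$ is immediate. For the inductive step I would assume the bound $\sum_{k=M(2)}^{n-1}a_k\le 2a_{n-1}$ at stage $n-1$, peel off the top summand, and estimate
\[
\sum_{k=M(2)}^{n}a_k=\Bigl(\sum_{k=M(2)}^{n-1}a_k\Bigr)+a_n\le 2a_{n-1}+a_n.
\]
The decisive input is now \eqref{eq:ak} with $C=2$: since $n\ge M(2)$, it yields $2a_{n-1}\le a_n$, and therefore $2a_{n-1}+a_n\le 2a_n$, which closes the induction.

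The step I expect to require the most care is the index bookkeeping rather than any genuine difficulty: one must check that the threshold $M(2)$ appearing as the lower limit of summation is exactly the threshold above which \eqref{eq:ak} is available with $C=2$, so that the inequality $2a_{n-1}\le a_n$ is legitimately applicable at every stage of the induction. Beyond this there is no real obstacle—the entire content of the estimate has already been absorbed into the earlier claim, and \eqref{eq:sumlessthan2} is simply the statement that, under the super-geometric growth of the $a_k$ encoded in \eqref{eq:ak}, the partial sums are dominated, up to a factor $2$, by their last term.
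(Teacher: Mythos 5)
Your proof is correct and is essentially identical to the paper's: both argue by induction on $n$, with the trivial base case at $n=M(2)$, and close the inductive step by applying \eqref{eq:ak} with $C=2$ to get $2a_{n-1}+a_n\le 2a_n$. Your additional remarks on positivity of the $a_k$ and on matching the summation threshold to the threshold in \eqref{eq:ak} are sensible but do not change the argument.
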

 
 \begin{claim}
 For all $n\ge \wh{M}=\max\{M(2)+1,M(M(2))\}$,
 \begin{equation}\label{eq:nice}
 \sum_{k=2}^{n} a_k \le 3 a_{n} .
 \end{equation}
\begin{proof}
  By \eqref{eq:ak}, $M(2)a_{M(2)}\le M(2) a_{n-1}\le a_n$ for all $n\ge \wh{M}$. Using \eqref{eq:sumlessthan2}, we conclude that $$\sum_{k=2}^{n} a_k \le M(2)a_{M(2)} + 2 a_n \le 3a_n,$$ 
  for every $n\ge \wh{M}$.
\end{proof}
\end{claim}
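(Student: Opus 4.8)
The plan is to split the sum at the index $M(2)$, since the two previous claims are tailored to exactly this cut. Writing
\[
  \sum_{k=2}^{n} a_k \;=\; \sum_{k=2}^{M(2)-1} a_k \;+\; \sum_{k=M(2)}^{n} a_k,
\]
I would bound the tail $\sum_{k=M(2)}^{n} a_k$ by $2a_n$ straight from \eqref{eq:sumlessthan2}, and observe that the head $\sum_{k=2}^{M(2)-1} a_k$ is a fixed constant, independent of $n$. The whole statement then reduces to absorbing this constant head into a single copy of $a_n$, i.e.\ to proving $\sum_{k=2}^{M(2)-1} a_k \le a_n$ for all $n \ge \wh{M}$.

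For the head I would invoke the rapid growth encoded in \eqref{eq:ak}. Being a sum of fewer than $M(2)$ terms, each dominated by $a_{M(2)}$ (using that $(a_k)$ is non-decreasing past some index, the case $C=1$ of \eqref{eq:ak}), the head is crudely bounded by $M(2)\,a_{M(2)}$. It then suffices to show $M(2)\,a_{M(2)} \le a_n$: applying \eqref{eq:ak} with $C=M(2)$ gives $M(2)\,a_{n-1}\le a_n$ as soon as $n \ge M(M(2))$, while $a_{M(2)}\le a_{n-1}$ as soon as $n-1 \ge M(2)$, and chaining these yields $M(2)\,a_{M(2)} \le M(2)\,a_{n-1}\le a_n$. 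Together with the tail bound this gives $\sum_{k=2}^{n} a_k \le M(2)\,a_{M(2)} + 2a_n \le 3a_n$, as required.

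I do not expect a genuine obstacle here: all the substantive growth is already packaged in \eqref{eq:ak} and \eqref{eq:sumlessthan2}, and what remains is pure bookkeeping of thresholds. The only point requiring care is to choose $\wh{M}$ so that the tail estimate (needing $n \ge M(2)$), the geometric step $M(2)\,a_{n-1}\le a_n$ (needing $n \ge M(M(2))$), and the comparison $a_{M(2)}\le a_{n-1}$ (needing $n \ge M(2)+1$) all hold simultaneously --- which is exactly why the threshold is taken to be $\wh{M} = \max\{M(2)+1,\,M(M(2))\}$.
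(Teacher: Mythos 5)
Your proof is correct and follows essentially the same route as the paper's: both bound the tail $\sum_{k=M(2)}^{n}a_k$ by $2a_n$ via \eqref{eq:sumlessthan2} and absorb the constant head through the chain $M(2)a_{M(2)}\le M(2)a_{n-1}\le a_n$ coming from \eqref{eq:ak}. Your version merely spells out the threshold bookkeeping a little more explicitly than the paper does.
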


Finally, we use inequality \eqref{eq:nice} to see that
  \begin{equation}\label{eq:nice2}
  0\le \frac{1}{a_n} \cdot \sum_{k=2}^{n-1} a_k \le \frac{3a_{n-1}}{a_n} 
  \end{equation}
  for every $n$ large enough. 
 Since the sequence $(\log\norm{S_{n-1}}/\log\norm{S_{n}})_{n\in \mathbb{N}}$ is positive and bounded above by the constant $A$, it follows from Lemma~\ref{lem:mn} and Lemma~\ref{lem:liminf} that
  \begin{equation}\label{eq:nice3}
   \lim_{n\rightarrow \infty} \frac{a_{n-1}}{a_n} = \lim_{n\rightarrow \infty} \frac{\wt{m}_{n-2}}{\wt{m}_{n-1}} \cdot \frac{\log\norm{S_{n-1}}}{\log\norm{S_n}} =  0
  \end{equation}
 and from \eqref{eq:nice2} we deduce that, 
\begin{equation}\label{eq:limit11}
   \lim_{n\rightarrow \infty} \sum_{k=2}^n \frac{a_{k}}{a_{n}} =\lim_{n\rightarrow \infty} \sum_{k=2}^n \frac{\wt{m}_{k-1} \log \norm{S_{k}}}{\wt{m}_{n-1} \log \norm{S_{n}}} = 1.
  \end{equation}
  We will use equation \eqref{eq:limit11} at the end of the proof. 
  
  We now look at the asymptotics of the sequence $(b_k)_{k\ge 2}$. Again using \eqref{eq:nice2} and the fact that $b_k\le a_k$ for all $k\ge 2$, we have
  \[
    0\le \left(\sum_{k=2}^{n-1} \frac{b_k}{b_n} \right) \cdot \frac{b_n}{a_n} \le \sum_{k=2}^{n-1} \frac{a_k}{a_n} \le \frac{3a_{n-1}}{a_n} 
  \]
and, since $(b_n / a_n)_{n\ge 2}$ tends to $1$ for $n$ that tends to infinity, it follows that
  \begin{equation}\label{eq:limit1}
   \lim_{n\rightarrow \infty} \sum_{k=2}^n \frac{b_k}{b_n} = 1.
  \end{equation}
  
 Let us now summarize everything in the proof of \eqref{eq:quant}. As previously observed, we can write the Hausdorff dimension of $H^\a_n$ as:
  \begin{equation}\label{eq:last}
  \dim_{\mathrm{H}}(H^{\a}) = \liminf_{n\rightarrow \infty} \frac{\log\norm{H_n^\a}}{\log\norm{G:N_n}} = \liminf_{n\rightarrow \infty} \frac{\sum\limits_{k=2}^{n} b_k}{\sum\limits_{k=1}^{n} \wt{m}_{k-1}  \log\norm{S_k}}.
  \end{equation}
  Define $a_1=\a \wt{m}_0 \log\norm{S_1}$ and observe that $\lim_{n\rightarrow \infty} a_1/a_n=0$. Collecting the highest terms in the top and in the bottom of the fraction in \eqref{eq:last}, we get  
  \begin{equation*}
    \dim_{\mathrm{H}}(H^{\a}) =  \liminf_{n\rightarrow \infty}\frac{ \sum\limits_{k=2}^n \frac{b_k}{b_n}}{  \sum\limits_{k=1}^{n} \frac{a_k}{a_n}} \cdot   
  \frac{\left\lfloor \a \cdot c_{n-1}\right\rfloor \cdot o_{n-1}}{\wt{m}_{n-1}}. 
 \end{equation*}
 By \eqref{eq:limit1} and \eqref{eq:limit11}, the \emph{limit} of the first factor of the above product is $1$ and, by \eqref{eq:limit2}, the \emph{limit} of  second factor is $\a$. Therefore $\dim_{\mathrm{H}}(H^\a) = \a$, as claimed. This concludes the proof. \qed

 \begin{rmk*}
  We would like to point out that in the proof of Theorem~\ref{thm:B} all the limits considered are actual limits and not inferior limits.
 \end{rmk*}

 \section*{Acknowledgements}
We thank the anonymous referee for their comments on the exposition of this work. The second author acknowledges the support of Royal Holloway College for the Ph.D.\ grant under which part of this work was carried out.

\bibliographystyle{plain}

\begin{thebibliography}{20}

\bibitem[Ab]{Ab94}
A.~G.~Abercrombie. 
\newblock Subgroups and subrings of profinite rings.
\newblock {\em Math.\ Proc.\ Cambridge Philos.\ Soc.}, 116(2):209--222, 1994.

\bibitem[AV]{abert:hausdorff}
M.~Ab{\'e}rt and B.~Vir{\'a}g.
\newblock Dimension and randomness in groups acting on rooted trees.
\newblock {\em J. Amer. Math. Soc.}, 18(1):157--192, 2005.

\bibitem[BS]{barnea:hausdorff}
Y.~Barnea and A.~Shalev.
\newblock Hausdorff dimension, pro-{$p$} groups, and {K}ac-{M}oody algebras.
\newblock {\em Trans. Amer. Math. Soc.}, 349(12):5073--5091, 1997.

\bibitem[Gr]{grig:branch}
R.~I.~Grigorchuk. 
\newblock Just infinite branch groups.
\newblock In New horizons in pro-{$p$} groups, volume 184 of {\em Progr. Math.}, pages 121--179. Birkh\"auser Boston, Boston, MA, 2000.

\bibitem[Kl]{klopsch:thesis}
B.~Klopsch.
\newblock Substitution Groups, Subgroup Growth and Other Topics.
\newblock D.Phil.\ Thesis, University of Oxford, 1999. 

\bibitem[KTZ]{KTZ}
B.~Klopsch, A.~Thillaisundaram and A.~Zugadi-Reizabal.
\newblock Hausdorff dimensions in $p$-adic analytic groups.
\newblock  arXiv:1702.06789.

\bibitem[KV]{KV17}
B.~Klopsch and M.~Vannacci.
\newblock Embedding properties of hereditarily just infinite profinite wreath products.
\newblock {\em J. Algebra}, 476:297--310, 2017

\bibitem[Lu]{lucchini:a2gen}
A.~Lucchini.
\newblock A $2$-generated just-infinite profinite group which is not positively generated.
\newblock {\em Israel J. Math.}, 141:119--123, 2004.

\bibitem[Re]{Re12}
Colin~D. Reid.
\newblock Inverse system characterizations of the (hereditarily) just infinite
  property in profinite groups.
\newblock {\em Bull. Lond. Math. Soc.}, 44(3):413--425, 2012.

\bibitem[Si]{Si08}
O.~Siegenthaler. 
\newblock Hausdorff dimension of some groups acting on the binary tree.
\newblock {\em J.\ Group Theory}, 11:555--567, 2008.

\bibitem[Sh]{shalev:newhorizons}
A.~Shalev.
\newblock Lie methods in the theory of pro-{$p$} groups.
\newblock In New horizons in pro-{$p$} groups, volume 184 of {\em Progr. Math.}, pages 1--54. Birkh\"auser Boston, Boston, MA, 2000.

\bibitem[Su]{sunic:1}
Z.~{\v{S}}uni{\'c}.
\newblock Hausdorff dimension in a family of self-similar groups.
\newblock {\em Geom. Dedicata}, 124:213--236, 2007.

\bibitem[Va15]{Va15}
M.~Vannacci.
\newblock Finite generation of iterated wreath products in product action.
\newblock {\em Arch. Math.}, 105(3):205--214, 2015.

\bibitem[Va16]{Va16}
M.~Vannacci.
\newblock On hereditarily just infinite profinite groups obtained via iterated wreath products.
\newblock {\em J. Group Theory}, 19(2):233?238, 2016.

\bibitem[Wi]{wilson:largehereditarily}
J.~S.~Wilson.
\newblock Large hereditarily just infinite groups.
\newblock {\em J. Algebra}, 324(2):248--255, 2010.

\end{thebibliography}
\def\cprime{$'$}
\end{document}